\documentclass[oneside,11pt]{amsart}
\pdfoutput=1

\usepackage{amssymb}
\usepackage{amsmath}
\usepackage[pdftex]{graphicx}
\usepackage[hidelinks]{hyperref}
\usepackage{algorithm}
\usepackage{algpseudocode}

\usepackage[font=small]{caption}
\usepackage[a4paper,left = 3cm, right= 3cm,  bottom = 4.5cm]{geometry}
\usepackage{mathtools}
\usepackage{microtype}
\usepackage{xcolor}

\newcommand{\vertiii}[1]{{\left\vert\kern-0.25ex\left\vert\kern-0.25ex\left\vert #1 
    \right\vert\kern-0.25ex\right\vert\kern-0.25ex\right\vert}}

\newtheorem{theorem}{Theorem}
\theoremstyle{definition}
\newtheorem{example}[theorem]{Example}

\numberwithin{theorem}{section}
\numberwithin{equation}{section}

\newcommand{\R}{\mathbb{R}}
\newcommand{\st}{\mathrm{s.t.}}
\newcommand{\sdp}{\mathrm{sdp}}

\newcommand{\svd}{\mathrm{svd}}
\newcommand{\T}{\mathsf{T}}
\newcommand{\op}{T}

\DeclareMathOperator{\tridiag}{tridiag}

\DeclareMathOperator*{\argmin}{argmin}
\DeclareMathOperator{\vectorize}{vec}
\DeclareMathOperator{\matricize}{mat}


\title[]{Kronecker Product Approximation of Operators in Spectral Norm via Alternating SDP}

\author{Mareike Dressler \and Andr\'e Uschmajew \and Venkat Chandrasekaran}

\date{}

\subjclass[2020]{Primary: 47A58, 90C22; Secondary: 65F45}
\keywords{Operator approximation, Kronecker product, semidefinite programming, alternating optimization, matrix equations}


\begin{document}

\begin{abstract}
The decomposition or approximation of a linear operator on a matrix space as a sum of Kronecker products plays an important role in matrix equations and low-rank modeling.  The approximation problem in Frobenius norm admits a well-known solution via the singular value decomposition.  However, the approximation problem in spectral norm, which is more natural for linear operators, is much more challenging.  In particular, the Frobenius norm solution can be far from optimal in spectral norm.  We describe an alternating optimization method based on semidefinite programming to obtain high-quality approximations in spectral norm, and we present computational experiments to illustrate the advantages of our approach.
\end{abstract}

\maketitle

\section{Introduction and problem statement}

Let $\op \colon \R^{m \times n} \to \R^{m \times n}$ be a linear operator acting on the space of real $m \times n$ matrices.  Every such operator can be decomposed into a sum of tensor product operators $A_j \otimes B_j$ such that
\begin{equation}\label{eq: formula for TX}
\op(X) = \sum_{j = 1}^r A_j^{} X B_j^\T
\end{equation}
for some matrices $A_j \in \R^{m \times m}$ and $B_j \in \R^{n \times n}$. By choosing a suitable $mn \times mn$ matrix representation of the linear operator $\op$, the above decomposition can be written as
\begin{equation}\label{eq: Kronecker decomposition}
\op = \sum_{j = 1}^r A_j \otimes B_j,
\end{equation}
where now $A_j \otimes B_j$ is the standard Kronecker product of matrices $A_j$ and $B_j$. In the following we will not distinguish between these two interpretations of $\op$.

The smallest $r$ needed for a decomposition~\eqref{eq: Kronecker decomposition} to exist is often called the \emph{Kronecker rank} of $\op$. The Kronecker rank has the (sharp) upper bound
\[
r \le \min(m^2,n^2),
\]
and a minimal decomposition can be found by applying a rank revealing decomposition to a suitable reshape $\widehat \op$ of $\op$ into an $m^2 \times n^2$  matrix~\cite{VanLoan1992,Konstantinov2000}; see section~\ref{sec: svd method}. In~\cite{Konstantinov2000}, the Kronecker rank has also been called the Sylvester index of $\op$.

While the usual rank of $\op \in \R^{mn \times mn}$ measures the number of linear independent columns/rows, the Kronecker rank measures the number of linear independent blocks when partitioning $T$ into $m^2$ blocks of size $n \times n$. The rank and the Kronecker rank are unrelated in general. For instance, if $\op$ consists of the blocks $\op_{\mu \nu} = a_\mu^{} b_\nu^\T$, where $a_1,\dots,a_m$ and $b_1,\dots,b_m$ are linear independent systems in $\R^n$, then $\op$ has ordinary rank one but Kronecker rank $m^2$ (here $m \le n$). Vice versa, when $\op = A \otimes B$ with invertible $A$ and $B$, its rank is $mn$ while the Kronecker rank is only one.

Linear operators with small Kronecker rank play an important role in numerical linear algebra and scientific computing.

In particular, they appear in many linear matrix equations arising in control theory or numerical analysis. While solving a general linear matrix equations such as
\begin{equation}\label{eq: linear matrix equations}
\op(X) = Y
\end{equation}
is numerically challenging if sought matrices $X$ are large, a Kronecker structured equation of the form~\eqref{eq: formula for TX} allows for a more efficient numerical treatment if the number of terms $r$ is not too large. The reason is that the evaluation of the operator $\op$ on a matrix $X$ can be computed by just a few left and right matrix multiplications. This allows the implementation of iterative methods for solving equations such as~\eqref{eq: linear matrix equations} at (relatively) small computational cost, and even more so if additional structure of the matrices $A_j$ and $B_j$ can be exploited.

There is also a second, in a sense more fundamental, reason why operators with small Kronecker rank are of interest in matrix equations: in practice, it can often be observed that the solution $X$ to an equation~\eqref{eq: linear matrix equations}, where $\op$ has small Kronecker rank and the right-hand side $Y$ is a low-rank matrix, can itself be well approximated by low-rank matrices. Thus a low-rank model $X \approx U V^\T$ for the solution is then justified which in turn makes the application of $\op$ via left and right multiplications even more efficient. This is the basis for many efficient low-rank solvers for matrix equations; see~\cite{Simoncini2016} for an overview. The numerical observation can be rigorously proven for certain matrix equations, most notably for Sylvester-type equations
\[
\op(X) = A X + X B^\T = Y
\]
when $A$ and $B$ are symmetric positive definite matrices. It can be shown that the inverse $\op^{-1}$ of this operator can be approximated in operator norm by a sum of $k$ Kronecker products with exponential (or subexponential) error decay in $k$, for instance, by means of exponential sums~\cite{Grasedyck04}. {Applying such an approximate inverse of $\op$ to the right-hand side $Y$ of the matrix equation provides an approximate solution whose rank is at most $k$ times the rank of $Y$, with an error decaying exponentially in $k$.} This even works for tensor versions of Sylvester-type operators. We refer to~\cite{Beylkin2002,Tyrtyshnikov2004,Hackbusch2006a,Hackbusch2006b,Oseledets2009,Hackbusch2019} for applications of Kronecker product approximations of operators and low-rank tensor calculus in scientific computing. 

Motivated by the above considerations, we are interested in the general question of \emph{approximating} a matrix $\op \in \R^{mn \times mn}$ or its inverse by an operator of low Kronecker rank in operator norm. Given $\op$ and a rank bound $k$, we hence consider the optimization problem
\begin{equation}\label{eq: optimization problem}
\min_{A_j, B_j} \Bigg\| \op - \sum_{j = 1}^k A_j \otimes B_j \Bigg\|_2.
\end{equation}
Here $\| \cdot \|_2$ denotes the spectral norm of $mn \times mn$ matrices, which corresponds to the operator norm on $\R^{m \times n}$ with respect to the Frobenius norm. In particular,
\[
\Bigg\| \op - \sum_{j = 1}^k A_j \otimes B_j \Bigg\|_2 = \max_{\| X \|_F = 1} \Bigg\| \op (X) - \sum_{j=1}^k A_j^{} X B_j^\T \Bigg\|_F.
\]
We will also consider the approximation of inverse operators $\op^{-1}$ using a different cost function (section~\ref{sec: generalizations}).

It is known that if in~\eqref{eq: optimization problem} we replace the spectral norm by the Frobenius norm, the approximation problem admits (in principle) a closed-form solution via a singular value decomposition (SVD) of a reshaped $m^2 \times n^2$ matrix $\widehat \op$, which was already mentioned before. This method has been worked out in~\cite{VanLoan1992} and will be explained in section~\ref{sec: svd method}. The main underlying reason that such an approach works for the Frobenius norm is that this norm is invariant under reshaping a matrix. For the spectral norm, this is, however, not the case. So while indeed the SVD also provides a best low-rank approximation in spectral norm of $\widehat \op$ as an operator from $\R^{n^2}$ to $\R^{m^2}$, it will not provide the optimal low Kronecker rank approximation for $\op$ in spectral norm. To our knowledge, the problem~\eqref{eq: optimization problem} has not been addressed in the literature in this general form.

\smallskip
In this work, we consider an alternating optimization method for solving~\eqref{eq: optimization problem} based on semidefinite programming. The underlying idea is based on the well-known fact that the best approximation of a given matrix in spectral norm by an element from an affine linear matrix subspace can be found by solving a semidefinite program (SDP) and is therefore computable in polynomial time to a desired accuracy.  Due to the bilinearity of the Kronecker product, an alternating optimization approach for the unknown matrices $A_j$ and $B_j$ in problem~\eqref{eq: optimization problem} leads to a sequence of approximation problems on linear matrix subspaces, which therefore can be solved via SDPs. Furthermore, by adding regularization one can even guarantee that the solutions to the subproblems are unique and bounded, which plays a role in our convergence analysis.

Such an approach has computational limitations when $m$ or $n$ become large, as it requires solving rather large scale SDPs. However, as a proof of concept, our numerical experiments demonstrate that applying our algorithm can provide significantly better low Kronecker rank approximations to operators $\op$ than the SVD method in certain settings.

\smallskip
The paper is organized as follows. In section~\ref{sec: svd method}, we review the SVD method for computing low Kronecker rank approximations and provide an example showing that this method can be far from optimal with regard to the spectral norm. Section~\ref{sec: alternating sdp approach} presents the alternating SDP approach for solving the spectral norm approximation problem~\eqref{eq: optimization problem}. We also discuss a regularized version of the problem in section~\ref{subsec: regularization}, which leads to the general form of Algorithm~\ref{alg:Alternating}. Moreover, we briefly touch upon how the problem of approximating inverses could be handled (section~\ref{sec: generalizations}). In section~\ref{sec: convergence}, we provide some convergence statements based on results for biconvex optimization from the literature. Finally, in section~\ref{sec: experiments}, we present computational experiments illustrating the benefit of our proposed approach. We conclude with a brief discussion and an outlook on potential future work; see section~\ref{sec: conclusion-outlook}.

\section{The SVD method}\label{sec: svd method}

The SVD solution to the Kronecker product approximation problem has been proposed in~\cite{VanLoan1992}; see also~\cite{VanLoan2000,Konstantinov2000}. It works by rearranging the matrix $\op \in \R^{mn \times mn}$ into another matrix $\widehat \op \in \R^{m^2 \times n^2}$. Specifically, from a block partition 
\[
\op = \begin{bmatrix} T_{11} & T_{12} & \cdots \\
T_{21} & \ddots & \\
\vdots & &
\end{bmatrix}
\]
with $m^2$ blocks $T_{\mu \nu} \in \R^{n \times n}$ one constructs the matrix $\widehat \op$ with rows $\vectorize(T_{\mu \nu})^\T$ in the ordering
\[
\widehat \op = \begin{bmatrix} -\vectorize(T_{11})^\T- \\ -\vectorize(T_{21})^\T- \\ \vdots \\ -\vectorize(T_{12})^\T-  \\ \vdots \end{bmatrix}.
\]
It is then easy to see that the Kronecker product decomposition~\eqref{eq: Kronecker decomposition} of $\op$ is equivalent to a decomposition
\[
\widehat \op = \sum_{j=1}^r \vectorize(A_j) (\vectorize(B_j))^\T
\]
into rank-one matrices. This shows that a (usual) low-rank decomposition or approximation of $\widehat \op$ yields decompositions or approximations of $\op$ by sums of Kronecker products. Clearly the transformation $\op \leftrightarrow \widehat \op$ is an isometry between $\R^{mn \times mn}$ and $\R^{m^2 \times n^2}$ in Frobenius norm, since it only rearranges entries of $\op$. 

The required low-rank approximation of $\widehat \op$ can be achieved, for instance, using SVD. This provides the method displayed as Algorithm~\ref{alg:svd}, where {$\matricize_{m}(\cdot)$} denotes the inverse of the $\vectorize(\cdot)$ operation {for $m \times m$ matrices $A$, that is, $\matricize_m(\vectorize(A)) = A$}. In the algorithm, we included an optional step for optimizing the scaling of the truncated SVD with respect to spectral norm approximation. It could be solved using semidefinite programming similar to the methods derived in section~\ref{sec: alternating sdp approach}.

\begin{algorithm}[h]
\caption{SVD method~\cite{VanLoan1992}}\label{alg:svd}
\begin{algorithmic}
\Require Matrix $\op \in \R^{mn \times mn}$, dimensions $m,n$, approximation rank $k$
\Ensure Kronecker rank-$k$ approximation $\op_{\svd}$ of $\op$
\State Construct the matrix $\widehat \op$ from $\op$.
\State Compute SVD: $\widehat \op = U \Sigma V^\T = \sum_{j=1}^r \sigma_j^{} u_j^{} v_j^\T$.
\State Set $A_j = {\matricize_m}(\sigma_j^{1/2} u_j), \quad B_j = {\matricize_n}(\sigma_j^{1/2} v_j)$.
\State Set $\alpha_1 = \dots = \alpha_k = 1$.
\State \textbf{Optional:} Find $(\alpha_1,\dots,\alpha_k) = \displaystyle \argmin_{\alpha_j \in \R} \Bigg\| \op -  \sum_{j=1}^k \alpha_j A_j \otimes B_j \Bigg\|_2$. \Comment{Optimal scaling}
\State \Return $\op_{\svd} \coloneqq \sum_{j=1}^k \alpha_j A_j \otimes B_j$
\end{algorithmic}
\end{algorithm}

It is now interesting to observe that the approximation $\sum_{j=1}^k A_j \otimes B_j$ of $\op$ obtained by the above procedure is optimal in Frobenius norm; that is, it solves a version of problem~\eqref{eq: optimization problem} in Frobenius norm instead of the spectral norm. The reason is that {by the Eckart--Young--Mirsky theorem} $\sum_{j=1}^k \sigma_j^{} u_j^{} v_j^\T$ is an optimal rank-$k$ approximation of $\widehat \op$ in Frobenius norm and the transformation $\op \leftrightarrow \widehat \op$ is an isometry.  
More generally, {the Eckart--Young--Mirsky theorem actually states that} the truncated SVD of $\widehat \op$ is optimal in any unitarily invariant norm on $\R^{m^2 \times n^2}$; in particular, it is the best approximation in spectral norm on $\R^{m^2 \times n^2}$. Therefore, $\sum_{j=1}^k A_j \otimes B_j$ is also an optimal approximation of $\op$ in the norm defined via
\[
\vertiii{\op} \coloneqq \| \widehat \op \|_2
\]
on $\R^{mn \times mn}$. Unfortunately, in general, this norm (and other possible ones obtained in this way) does not equal the spectral norm in $\R^{mn \times mn}$. For example, for $\op = \linebreak I \otimes I \leftrightarrow \widehat \op = \vectorize(I_m) \vectorize(I_n)^\T$ (identity matrices) we have $\| \op \|_2 = 1$, but $\vertiii{ \op } = \| \widehat \op \|_2 = \| \vectorize(I_m) \|_2 \| \vectorize(I_n) \|_2 = \sqrt{mn}$.

Correspondingly, the SVD approach will usually not lead to optimal solutions of the initial approximation problem for $\op$ in spectral norm, even if the scaling is optimized. It is in fact not so difficult to construct such counterexamples. 

\begin{example}\label{ex: counterexample}
Let 
\begin{equation}\label{eq: T counterexample}
\op=\sigma_1 A_1 \otimes B_1 +  A_2\otimes B_2
\end{equation}
such that 
$\langle A_1,A_2\rangle = 0$, $\langle B_1,B_2\rangle = 0$, $\|A_1\|_F = \|A_2\|_F = \|B_1\|_F = \|B_2\|_F = 1$, and $\sigma_1 > 1$. Here $\langle \cdot, \cdot \rangle$ denotes the Frobenius inner product. Assume the goal is to find a Kronecker rank-one approximation, i.e., $k=1$. Note that by construction~\eqref{eq: T counterexample} already corresponds to an SVD of $\widehat \op$. Hence Algorithm~\ref{alg:svd} proposes $\op_{\svd} = \alpha_1 \sigma_1 A_1 \otimes B_1$ as an approximation to $\op$. The error is
\[
\op - \op_{\svd} = \sigma_1(1 - \alpha_1) A_1 \otimes B_1 + A_2 \otimes B_2.
\]
We now choose $A_2 = a a^\T$ and $B_2 = b b^\T$ to be rank-one matrices (the symmetry is not essential) {with $\| a \|_2 = \| b \|_2=1$}, and $A_1, B_1$ to be matrices with $A_1 a = 0$ and $B_1 b = 0$. Applying $T$ to the rank-one matrix $a b^\T$ we then have that
\[
(\op - \op_{\svd})(a b^\T) = \sigma_1(1 - \alpha_1) (A_1 a) (B_1 b)^\T + a a^\T a b^\T b b^\T = a b^\T
\]
for any choice of $\alpha_1$, {using that $A_1 a = 0$ and $a^\T a = b^\T b = 1$}. This shows that
\[
\| \op - \op_{\svd} \|_2 \ge 1.
\]
(Actually, when $\alpha_1$ is picked optimally as assumed in Algorithm~\ref{alg:svd}, one also has $\| \op - \op_{\svd} \|_2 \le \| A_2 \otimes A_1 \|_2 = 1$, hence equality.) On the other hand, using $ A_2 \otimes B_2$ as an approximation would lead to
\[
\| \op - A_2 \otimes B_2 \|_2 = \| \sigma_1 A_1 \otimes B_1 \|_2 = \sigma_1 \| A_1 \|_2 \| B_1 \|_2
\]
using properties of the Kronecker product. If we take $A_1$ and $B_1$ to be of rank $m-1$ and $n-1$, respectively, with identical (nonzero) singular values $\frac{1}{\sqrt{m-1}}$ and $\frac{1}{\sqrt{n-1}}$, we obtain
\begin{equation}\label{eq: errorbound}
\| \op - A_2 \otimes B_2 \|_2 = \sigma_1 \frac{1}{\sqrt{(m-1)(n-1)}}.
\end{equation}
For $\sigma_1$ close to one and $m,n$ large this is much smaller than one, demonstrating that the SVD solution can be rather bad. Note that in this construction (assuming $\sigma_1 \frac{1}{\sqrt{(m-1)(n-1)}} < 1$), one can show that $\| \op \|_2 = 1$, so the above estimates measure the relative error. The SVD solution is hence not better than taking the zero approximation.
\end{example}

The above example is designed artificially to make the SVD method fail to provide a reasonably low Kronecker rank approximation in spectral norm, which clearly motivates the need for alternative methods. Of course, in practical instances the SVD solution might not be as bad. This is also observed in the numerical experiments in section~\ref{sec: experiments}, where the SVD method provides useful results that can still can be significantly improved.

\section{Alternating SDP approach}\label{sec: alternating sdp approach}

In this main section we describe an approach for solving the problem~\eqref{eq: optimization problem} based on alternating optimization and semidefinite programming.

\subsection{Basic idea}\label{subsec: basic idea sdp}

We use the fact that a minimization of the operator norm on an affine linear space of matrices can be turned into an SDP; see~\cite[section~4.6.3]{Boyd:Vandenberghe:ConvOpt}. This is based on the observation that for a matrix an inequality $\| S \|_2 \le \tau$ for the spectral norm is equivalent with $S^\T S \preceq \tau^2 I$ and $I$ being the identity matrix. Hence the problem~\eqref{eq: optimization problem} can be first reformulated as
\begin{equation*}
\begin{gathered}
 \min_{\tau \ge 0,\, A_j,\, B_j}\quad  \tau \\
 \st\quad   \left(\op-\sum_{j = 1}^k A_j \otimes B_j\right)^{\!\!\T} \left(\op-\sum_{j = 1}^k A_j \otimes B_j \right) \preceq \tau^2 I.   
\end{gathered}
\end{equation*}
By applying the Schur complement this turns into
\begin{equation}\label{eq: pre-SDP}
\begin{gathered}
 \min_{\tau,A_j,B_j}\quad  \tau \\
 \st\quad  M(\tau,A,B) \succeq 0
\end{gathered}
\end{equation}
with
\begin{equation*}\label{eq: Schur constraint}
    M(\tau,A,B) \coloneqq \begin{bmatrix}
\tau I & \op-\sum_{j = 1}^k A_j \otimes B_j \\
(\op-\sum_{j = 1}^k A_j \otimes B_j)^\T & \tau I 
\end{bmatrix}.
\end{equation*}
Note that the condition $\tau \ge 0$ has been dropped in this second version since it is automatically implied by the positive semidefiniteness of $M(\tau,A,B)$.

Since $M(\tau,A,B)$ is affine linear in each of the (block) variables $\tau$, $A = (A_1,\dots,A_k)$, and $B = (B_1,\dots,B_k)$, we have achieved that the optimization problem~\eqref{eq: pre-SDP} for each of these block variables, when the others are kept fixed, is an SDP. In fact, the problem for the auxiliary variable $\tau$ can be subsumed into the other ones: we have an SDP in $(\tau,A)$ when $B$ is fixed, or in $(\tau,B)$ when $A$ is fixed. As SDPs can be solved in polynomial time to a desired accuracy, this suggests an alternating optimization strategy to tackle the initial problem~\eqref{eq: optimization problem} through a sequence of SDPs: we first fix $B=(B_1,\ldots,B_k)$ and optimize $A=(A_1,\ldots,A_k)$ (and $\tau$), and then we fix $A$ to find the optimal $B$ (and $\tau$). We describe a generalization of this procedure in what follows with additional regularization parameters in Algorithm~\ref{alg:Alternating}. By construction the subproblems for $A$ and $B$ always admit at least one solution, since from~\eqref{eq: optimization problem} it follows that they are equivalent with finding a best approximation of $\op$ in operator norm on the \emph{linear} subspaces $\{ \sum_{j=1}^k A_j \otimes B_j \colon A_1,\dots,A_k \in \R^{m \times m} \}$ and $\{ \sum_{j=1}^k A_j \otimes B_j \colon B_1,\dots,B_k \in \R^{n \times n} \}$ of $\R^{mn \times mn}$, respectively.

\subsection{Adding regularization}\label{subsec: regularization}

While the subproblems for $A$ and $B$ in the alternating optimization approach outlined above are guaranteed to admit optimal solutions, neither uniqueness nor {a uniform} boundedness of these solutions seems to be easy to guarantee in general. As we will see in section~\ref{sec: convergence}, this poses a problem in the convergence analysis of alternating block minimization problems. Both issues can be addressed by adding regularization to the approach outlined in section~\ref{subsec: basic idea sdp}, albeit at the expense of larger SDPs.

For convenience, we define 
\[
F(A,B) \coloneqq \Bigg\| \op - \sum_{j = 1}^k A_j \otimes B_j \Bigg\|_2,
\]
where again $A = (A_1,\dots,A_k)$ and $B = (B_1,\dots,B_k)$. Instead of~\eqref{eq: optimization problem} we  then consider
\begin{equation}\label{eq: regularized problem}
\min F_{\lambda,\mu}(A,B) = 
F(A,B) + \lambda \sum_{j=1}^k \| A_j \|^2_F + \mu \sum_{j=1}^k \| B_j \|_F^2
\end{equation}
with regularization parameters $\lambda, \mu \ge 0$. If $\lambda > 0$, then for fixed $B$ the function $A \mapsto F_{\lambda,\mu}(A,B)$ is strictly convex and coercive. A similar statement holds for the function $B \mapsto F_{\lambda,\mu} (A,B)$ in the case $\mu > 0$. Therefore, if $\lambda$ and $\mu$ are positive, the subproblems now admit unique solutions. Moreover, the sequences of solutions can easily be bounded (since the function values of $F_{\lambda,\mu}$ are decreasing and $F$ is bounded from below).

We claim that in the regularized problem the subproblems for $A$ and $B$ (and likewise for single $A_j$ and $B_j$) can again be turned into SDPs as follows. Assume $B$ is fixed and $A$ should be optimized. Introducing slack variables $\gamma_0,\gamma_1,\dots,\gamma_k$ we first rewrite~\eqref{eq: regularized problem} as
\begin{equation*}
\begin{gathered}
 \min_{\tau,\,\gamma_0,\,\gamma_1,\ldots,\gamma_k, A} \quad  \tau \\
 \begin{aligned}
 \st  \quad  F(A, B) &\leq \gamma_0,\\
   \lambda \| A_j \|_F^2 &\leq \gamma_j, \; j=1,\ldots, k,\\
   \gamma_0 + \gamma_1 + \cdots + \gamma_k &\leq \tau.
 \end{aligned}
\end{gathered}
\end{equation*}

As in~\eqref{eq: pre-SDP}, the first constraint $F(A,B) \le \gamma_0$ can be turned into $M(\gamma_0,A,B) \succeq 0$. The constraints $\lambda \| A_j \|_F^2 \le \gamma_j$ are equivalent to
\begin{equation}\label{eq:Schur constr}
   N_\lambda(\gamma_j,A_j) \coloneqq  \begin{bmatrix}
\gamma_j & \sqrt{\lambda} \vectorize(A_j) \\
\sqrt{\lambda} \vectorize(A_j)^\T &  I 
\end{bmatrix} \succeq 0
\end{equation}
 due to the Schur complement. Finally note that the last constraint is $\tau-\gamma_0- \cdots-\gamma_k\ge 0$. We hence arrive at the following SDP:

\begin{equation}\label{eq: SDP for A with regularization}
\begin{gathered}
 \min_{\tau,\,\gamma_0,\,\gamma_1,\ldots,\gamma_k, A} \quad  \tau \\
 \begin{aligned}
 \st \quad M(\gamma_0,A,B) &\succeq 0,\\
   N_\lambda(\gamma_j,A_j) &\succeq 0, \ j=1,\ldots, k,\\
   \tau-\gamma_0- \cdots-\gamma_k&\ge 0.
 \end{aligned}
\end{gathered}
\end{equation}
Note that the constraints automatically imply $\gamma_0,\gamma_1,\dots,\gamma_k \ge 0$, and thus also $\tau \ge 0$.

When $A$ is fixed and we aim to optimize for $B$, we have to solve the analogous problem
\begin{equation}\label{eq: SDP for B with regularization}
\begin{gathered}
 \min_{\tau,\,\delta_0,\,\delta_1,\ldots,\delta_k, B} \quad  \tau \\
 \begin{aligned}
 \st \quad M(\delta_0,A,B) &\succeq 0,\\
   N_\mu(\delta_j,B_j) &\succeq 0, \ j=1,\ldots, k,\\
   \tau-\delta_0- \cdots-\delta_k&\ge 0.
 \end{aligned}
\end{gathered}
\end{equation}
Here we have slightly abused notation since $N_\mu(\delta_j,B_j)$ might be of different size than defined in~\eqref{eq:Schur constr}. Of course, when $\lambda$ or $\mu$ are zero, the corresponding constraints in~\eqref{eq: SDP for A with regularization} and~\eqref{eq: SDP for B with regularization} can be simply omitted and the problem reduces to the basic idea explained in section~\ref{subsec: basic idea sdp}. 

The resulting alternating SDP algorithm is summarized as Algorithm~\ref{alg:Alternating}.

\begin{algorithm}[h]
\caption{Alternating SDP method (ASDP)}\label{alg:Alternating}
\begin{algorithmic}
\Require Operator $\op$, dimensions $m,n$,  initial $B_0=(B_{01},\ldots,B_{0k})$, approximation rank~$k$, regularization parameters $\lambda,\mu \ge 0$, number of iterations $N_{\text{outer}}$
\Ensure Kronecker rank-$k$ approximation $\op_{\sdp}$ of $\op$ in spectral norm
\State $B \gets B_0$
\For{$N_{\text{outer}}$ iterations}
    \State Update $A$ by solving the SDP~\eqref{eq: SDP for A with regularization} with current $B$ fixed.
    \State Update $B$ by solving the SDP~\eqref{eq: SDP for B with regularization} with current $A$ fixed.
\EndFor
\State \Return $\op_{\sdp}:=\sum_{j = 1}^k A_j \otimes B_j$
\end{algorithmic}
\end{algorithm}

\subsection{Approximation of inverse operators}\label{sec: generalizations}

As outlined in the introduction, in several applications one is actually interested in the approximation of the inverse $\op^{-1}$ of a given operator $\op$ by sums of Kronecker products. {We note that there is no obvious relation between low Kronecker rank approximations of $\op$ and $\op^{-1}$. Of course, when $\op = A \otimes B$ has Kronecker rank one and is invertible (with $A$ and $B$ square), then necessarily $A$ and $B$ are invertible and $T^{-1} = A^{-1} \otimes B^{-1}$. Apart from that, it is often numerically observed that when $\op$ has small Kronecker rank, then $\op^{-1}$ is \emph{well approximable} by low Kronecker rank. For Sylvester-type operators $\op = A \otimes I + I \otimes B$ (Kronecker rank two) this can be rigorously proven~\cite{Grasedyck04}; this is discussed for the special case of Lyapunov operator in section~\ref{sec: inverse Lyapunov}.}

Since the goal would be to find an approximation of $\op^{-1}$ without forming it explicitly, we cannot apply the above methods directly. A natural way is to consider the modified optimization problem 
\begin{equation}\label{eq: inverse approximation}
\min_{A_j, B_j} \left\| I - \op \cdot\left(\sum_{j = 1}^k A_j \otimes B_j\right) \right\|_2
\end{equation}
instead and tackle it via alternating optimization. Here $\cdot$ is the matrix product. Note that doing this for the Frobenius norm would lead to an alternating least squares algorithm. For the spectral norm, we rely once again on semidefinite programming.

The derivation of an ASDP method for~\eqref{eq: inverse approximation} is almost analogous to the one developed above. Instead of~\eqref{eq: pre-SDP}, we now have to solve
\begin{equation}\label{eq: generalization-SDP}
\begin{aligned}
 \min_{\tau,A_j,B_j}\quad & \tau \\
 \st\quad &  M_{\text{inv}}(\tau,A,B) \succeq 0
\end{aligned}
\end{equation}
with 
\begin{equation}\label{eq: M_inv}
     M_{\text{inv}}(\tau,A,B) \coloneqq \begin{bmatrix}
\tau I & I-\op \cdot(\sum_{j = 1}^k A_j \otimes B_j) \\
(I-\op\cdot(\sum_{j = 1}^k A_j \otimes B_j))^\T & \tau I 
\end{bmatrix}.
\end{equation}
Since $M_{\text{inv}}$ is affine linear in $\tau$, $A$, and $B$, this problem can again be tackled via alternating SDPs.

It also possible to include regularization in the same way as in~\eqref{eq: regularized problem}. The resulting SDP problems in the alternating optimization approach read the same as~\eqref{eq: SDP for A with regularization} and~\eqref{eq: SDP for B with regularization}, except with $M(\tau,A,B)$ being replaced with $M_{\text{inv}}(\tau,A,B)$.

Note that in such an implicit formulation to approximate the inverse one does in principle not need $\op$ in the full format but only the ability to apply $\op$ to operators of the form $\sum_{j = 1}^k A_j \otimes B_j$ several times. While without any structure this poses a potential computational bottleneck, it can be handled in the situation when $\op$ itself is a sum of Kronecker products, 
\[
\op=\sum_{J=1}^r C_J\otimes D_J.
\]
Then $M_{\text{inv}}(\tau,A,B)$ becomes 
\begin{equation}\label{eq: M_inv efficient}
     M_{\text{inv}} = \begin{bmatrix}
\tau I & \displaystyle{I-\sum_{J=1}^r \sum_{j = 1}^k (C_J A_j) \otimes (D_J B_j)} \\
\displaystyle{\left(I-\sum_{J=1}^r \sum_{j = 1}^k (C_J A_j) \otimes (D_J B_j)\right)^{\!\!\T}} & \tau I 
\end{bmatrix}.
\end{equation}
If $k$ and $r$ are small, the double sums can be efficiently computed. 

We point out that the implicit formulation~\eqref{eq: inverse approximation} for approximating inverses comes at a price in the case of badly conditioned operators. On the one hand,~\eqref{eq: inverse approximation} gives us full control on the relative approximation error of $\op^{-1}$ as follows: assume we have achieved
\[
\left\| I - \op \cdot\left(\sum_{j = 1}^k A_j \otimes B_j\right) \right\|_2 \le \varepsilon;
\]
then
\[
\frac{\left\| \op^{-1} - \sum_{j=1}^k A_j \otimes B_j \right\|_2}{\| \op^{-1}\|_2} \le \frac{\| \op^{-1} \|_2 \left\| I - \op \cdot\left(\sum_{j = 1}^k A_j \otimes B_j\right) \right\|_2}{\| \op^{-1} \|_2} \le \varepsilon.
\]
On the other hand, when the goal is to compute approximate solutions of linear matrix equations such as~\eqref{eq: linear matrix equations} by replacing $\op^{-1}$ with its approximation, then the absolute error or the relative error with respect to $\| \op \|_2$ would be of more relevance. We have
\[
\frac{\left\| \op^{-1} - \sum_{j=1}^k A_j \otimes B_j \right\|_2}{\| \op \|_2} \le \frac{\| \op^{-1} \|_2}{\| \op \|_2}\; \varepsilon,
\]
so this error estimate deteriorates with a bad condition number. Since the case of ill conditioned operators appears frequently in numerical analysis, this may put some additional limitations on this approach. In section~\ref{sec: inverse Lyapunov} we conduct some numerical experiments for approximating inverses based on formulation~\eqref{eq: inverse approximation}.

\section{Convergence}\label{sec: convergence}

Algorithm~\ref{alg:Alternating} realizes a block coordinate optimization method for the {nonconvex} cost function
\[
F_{\lambda,\mu}(A,B) = \Bigg\| \op - \sum_{j = 1}^k A_j \otimes B_j \Bigg\|_2 + \lambda \sum_{j=1}^k \| A_j \|^2_F + \mu \sum_{j=1}^k \| B_j \|_F^2
\]
in problem~\eqref{eq: regularized problem} by sequentially setting block variables to restricted global minima:
\begin{equation}\label{eq: BCD formulation}
\begin{aligned}
A^{(\ell+1)} \quad  &\leftarrow  \quad \argmin_A F_{\lambda,\mu}(A,B^{(\ell)}),\\
B^{(\ell+1)} \quad &\leftarrow \quad \argmin_B F_{\lambda,\mu}(A^{(\ell+1)},B).
\end{aligned}
\end{equation}
The convergence of such block coordinate methods is in general not easy to deduce and requires specific assumptions. For differentiable cost functions it can be shown that if the subproblems are guaranteed to admit unique minima, then all accumulation points of the iterates must be critical points; see, e.g.,~\cite[Proposition~3.7.1]{Bertsekas2016}. However, $F_{\lambda,\mu}$ above is not a differentiable function, since the spectral norm is not. {However, it is still a continuous biconvex function which means that the restriction to one of the block variables $A$ or $B$ is always a convex function. The convergence of of~\eqref{eq: BCD formulation} for such functions has been studied in~\cite{Tseng2001} and~\cite{Gorski2007} on which we rely in what follows.}

Fixed points $(A^*,B^*)$ of the procedure~\eqref{eq: BCD formulation} are characterized by the properties
\[
F_{\lambda,\mu}(A^*,B^*) \le F_{\lambda,\mu}(A,B^*) \quad \text{and} \quad F_{\lambda,\mu}(A^*,B^*) \le F_{\lambda,\mu}(A^*,B)
\]
for all $A$ and $B$, respectively. Such points are called \emph{partial optima} (of the function $F_{\lambda,\mu}$). By~\cite[Theorem~5.1]{Tseng2001}, any accumulation point $(A^*,B^*)$ of the sequence $(A^{(\ell)},B^{(\ell)})$ generated by~\eqref{eq: BCD formulation} for the biconvex function $F_{\lambda,\mu}$ will be indeed a partial optimum (i.e., a fixed point of~\eqref{eq: BCD formulation}) if the restricted functions $A \mapsto F_{\lambda,\mu}(A,B)$ and $B \mapsto F_{\lambda,\mu}(A,B)$ are hemivariate (which means they are not constant on any line segment) and if, in addition, $F_{\lambda,\mu}$ has bounded sublevel sets. Both properties are ensured when $\lambda > 0$ and $\mu > 0$. The restricted functions are then even strictly convex and coercive, implying that the subproblems have unique minima.

In~\cite[Theorem~4.9]{Gorski2007} a similar but slightly weaker condition is required to have the same conclusion: for continuous biconvex functions any accumulation point $(A^*,B^*)$ of a sequence $(A^{(\ell)},B^{(\ell)})$ generated by~\eqref{eq: BCD formulation} is indeed a partial optimum under the assumptions that (i) the iterates are bounded and (ii) at every accumulation point $(A^*,B^*)$ it holds that either $A \mapsto F_{\lambda,\mu}(A,B^*)$ or $B \mapsto F_{\lambda,\mu}(A^*,B)$ has a unique minimizer. However, in order to ensure both conditions we would again require $\lambda > 0$ and $\mu > 0$. In this case, both restricted functions have unique minimizers for which~\cite[Theorem~4.9]{Gorski2007} additionally states that $(A^{(\ell+1)}, B^{(\ell+1)}) - (A^{(\ell)}, B^{(\ell)}) \to 0$.

From the discussion above we conclude that if regularization is present we have the following convergence result for Algorithm~\ref{alg:Alternating}.

\begin{theorem}
Assume $\lambda > 0$ and $\mu > 0$. Then the sequence $(A^{(\ell)},B^{(\ell)})$ generated by Algorithm~\ref{alg:Alternating} possesses at least one accumulation point $(A^*,B^*)$. Further, every accumulation point is a partial optimum of $F_{\lambda,
\mu}$ and achieves the same function value $F_{\lambda,\mu}(A^*,B^*)$. It holds that $(A^{(\ell+1)}, B^{(\ell+1)}) - (A^{(\ell)}, B^{(\ell)}) \to 0$. 
\end{theorem}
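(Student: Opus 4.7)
The plan is to verify the hypotheses (i) and (ii) already identified in the discussion preceding the theorem, so that the result of Gorski et al.\ (Theorem~4.9 in~\cite{Gorski2007}) applies directly to the biconvex, continuous function $F_{\lambda,\mu}$. Continuity of $F_{\lambda,\mu}$ is immediate from continuity of the spectral and Frobenius norms together with bilinearity of the Kronecker product, and biconvexity was already noted: for fixed $B$, the map $A \mapsto \op - \sum_j A_j \otimes B_j$ is affine, so composing with $\|\cdot\|_2$ yields a convex function of $A$, and the regularizer $\lambda \sum_j \|A_j\|_F^2$ is convex (strictly convex when $\lambda > 0$); the argument for $B$ is symmetric.

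To establish (i), I would use monotone descent. Because each substep in~\eqref{eq: BCD formulation} sets a block variable to a global minimizer of the restricted problem, the sequence of values $F_{\lambda,\mu}(A^{(\ell)}, B^{(\ell)})$ is non-increasing. Since $F_{\lambda,\mu} \ge 0$, this sequence is bounded below and converges to some limit $F^* \ge 0$, and in particular $F_{\lambda,\mu}(A^{(\ell)}, B^{(\ell)}) \le F_{\lambda,\mu}(A^{(0)}, B^{(0)}) =: C$ for all $\ell$. Dropping the (nonnegative) spectral-norm term in $F_{\lambda,\mu}$ yields the bound
\[
\lambda \sum_{j=1}^k \| A_j^{(\ell)} \|_F^2 + \mu \sum_{j=1}^k \| B_j^{(\ell)} \|_F^2 \le C,
\]
which, because both $\lambda>0$ and $\mu>0$, forces all $A_j^{(\ell)}$ and $B_j^{(\ell)}$ to lie in a fixed Frobenius ball. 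Bolzano–Weierstrass then guarantees the existence of at least one accumulation point $(A^*, B^*)$. This is where the assumption $\lambda,\mu > 0$ is genuinely used: without it, a block of variables could escape to infinity while function values remain bounded.

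For (ii), the strict convexity of $A \mapsto \lambda \sum_j \|A_j\|_F^2$ (when $\lambda > 0$) combined with the convex spectral-norm term shows that $A \mapsto F_{\lambda,\mu}(A, B)$ is strictly convex for every fixed $B$, hence admits at most one minimizer; existence was already addressed in section~\ref{subsec: regularization} via coercivity. The analogous statement holds for the restriction to $B$ when $\mu > 0$. Thus at \emph{every} point, in particular at any accumulation point, both restricted problems have unique minimizers. The cited Theorem~4.9 of~\cite{Gorski2007} then yields that every accumulation point $(A^*, B^*)$ is a partial optimum and, under the stronger ``both restrictions unique'' hypothesis, that $(A^{(\ell+1)}, B^{(\ell+1)}) - (A^{(\ell)}, B^{(\ell)}) \to 0$.

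Finally, the statement that all accumulation points share the same function value follows from continuity and the monotone descent established above: $F_{\lambda,\mu}(A^{(\ell)}, B^{(\ell)}) \searrow F^*$, so along any subsequence $(A^{(\ell_p)}, B^{(\ell_p)}) \to (A^*, B^*)$ continuity gives $F_{\lambda,\mu}(A^*, B^*) = F^*$, independently of which subsequence is chosen. The main (mild) obstacle is bookkeeping around the boundedness step: strictly speaking, one needs the substeps to actually decrease $F_{\lambda,\mu}$ (not merely keep it constant), but this is automatic since each update \emph{is} a global minimizer of the corresponding restricted subproblem, so the inequality $F_{\lambda,\mu}(A^{(\ell+1)}, B^{(\ell+1)}) \le F_{\lambda,\mu}(A^{(\ell)}, B^{(\ell)})$ holds without any further argument.
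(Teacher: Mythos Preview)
Your proposal is correct and follows essentially the same argument as the paper's proof: bound the iterates via monotone descent of $F_{\lambda,\mu}$ together with the positive regularization terms, verify strict convexity (hence uniqueness) of both restricted problems, invoke Theorem~4.9 of~\cite{Gorski2007}, and conclude equal function values at accumulation points from monotonicity and continuity. Your write-up is more detailed but contains no substantive difference in approach.
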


\begin{proof}
Since the sublevel sets of $F_{\lambda,\mu}$ are bounded when $\lambda > 0$ and $\mu >0$, and $F_{\lambda,\mu}(A^{(\ell)},B^{(\ell)})$ is monotonically decreasing, there exists at least one accumulation point, and all accumulation points take the same function value. For the statements on the accumulation points we refer to~\cite[Theorem~5.1]{Tseng2001} and~\cite[Theorem~4.9]{Gorski2007} as discussed above.
\end{proof}

For clarity, we point out that in the case $\lambda = 0$ or $\mu = 0$ the solvability of the substeps is still ensured, since, e.g., the update of $A$ realizes a best approximation of $\op$ in operator norm on the linear subspace of all $\sum_{j=1}^k A_j \otimes B_j$ with the $B$ being fixed. However, we cannot guarantee uniqueness in the subproblems in this case. Moreover, we are unable to even ensure boundedness of the iterates, although we do not have a counterexample at hand.

Let us also briefly discuss whether partial optima of $F_{\lambda,\mu}$ are critical points. Clearly, if $F_{\lambda,\mu}$ is differentiable at a partial optimum $(A^*,B^*)$, then $\nabla F_{\lambda,\mu}(A^*,B^*) = 0$. This condition boils down to $\| \op - \sum_{j=1}^k A_j \otimes B_j \|_2$ being differentiable in $(A^*,B^*)$. It is well known that the matrix spectral norm function
\[
S \mapsto \|S\|_2 = \max_{\| x \|_2 = 1} \| Sx \|_2
\]
is differentiable at such $S$ for which the maximum on the right-hand side is achieved for a unique $x$. This follows from a general result on max-functions~\cite{Clarke1975}. Alternatively, let $s_1 \ge s_2 \ge \dots \ge 0$ denote the singular values of $S$; then the spectral norm $\| S \|_2 = s_1$ is (continuously) differentiable at $S$ if $s_1 > s_2$~\cite{Stewart1990}. This leads to the following statement: if at a partial optimum $(A^*,B^*)$ of $F_{\lambda,\mu}$ the matrix $S = \op - \sum_{j=1}^k A_j \otimes B_j$ has a unique largest singular value, then $(A^*,B^*)$ is a critical points of $F_{\lambda,\mu}$ in the sense that $\nabla F_{\lambda,\mu}(A^*,B^*) = 0$. Unfortunately, at the moment we do not have any alternative characterization of this property in terms of structural properties of $\op$.

\section{Numerical experiments}\label{sec: experiments}

In this section, we present results of computational experiments showcasing the potential advantages of the proposed alternating SDP approach. The algorithms have been coded in {MATLAB}. The SDPs are implemented using the {YALMIP} toolbox~\cite{Yalmip} with the SDP solver {SeDuMi}~\cite{SeDuMi}. An advantage of this toolbox is that the SDPs can be parsed almost directly in the given forms such as~\eqref{eq: SDP for A with regularization} and~\eqref{eq: SDP for B with regularization}. However, solving those SDPs becomes computationally expensive in larger dimensions, and no attempts at a more efficient implementation have been made. This explains the small values for $m$ and $n$ in the following experiments. One should also keep in mind that the nonconvex and nonsmooth optimization task~\eqref{eq: optimization problem} is far from trivial even in small dimensions. Our goal here is a proof of concept that the alternating SDP approach allows for better approximations in spectral norm than methods based on Frobenius norm.

\subsection{Illustration of Example~\ref{ex: counterexample}}\label{sec: experiment1}

As a first experiment we simulate the construction given in Example~\ref{ex: counterexample} in which the SVD solution is particularly poor. We take $m=n$ and construct
\begin{equation}\label{eq: example operator}
\op = \sigma_1 A_1 \otimes B_1 +  A_2\otimes B_2
\end{equation}
with $\sigma_1 = 1.9$ and
\begin{equation*}
  A_1 = B_1 = \frac{1}{\sqrt{m-1}}\begin{pmatrix}
  I_{m-1} & 0\\
  0 & 0
 \end{pmatrix}, \quad 
 A_2 = B_2 =  \begin{pmatrix}
  0_{m-1} & 0 \\
  0 & 1
 \end{pmatrix}.
\end{equation*}
Here $I_{m-1}$ and $0_{m-1}$ are the $(m-1)\times (m-1)$ identity and zero matrix, respectively. The goal is to solve~\eqref{eq: optimization problem} with $k=1$, that is, to minimize $\| \op - A \otimes B \|_2$.

As explained in Example~\ref{ex: counterexample}, the SVD method in Algorithm~\ref{alg:svd} will select a solution $\op_{\svd} = \alpha_1 \sigma_1 A_1 \otimes A_2$, but the error will be constant, $\| \op - \op_{\svd} \|_2 = 1$. Note that for $m = n \ge 3$ this is also the relative error, since $\| \op \|_2 =1$ then. On the other hand, an optimal approximation should yield an error less than $\| \op - A_2 \otimes B_2 \|_2 = \sigma_1 / (m-1)$.

\begin{figure}[t]
\includegraphics[width=.67\textwidth]{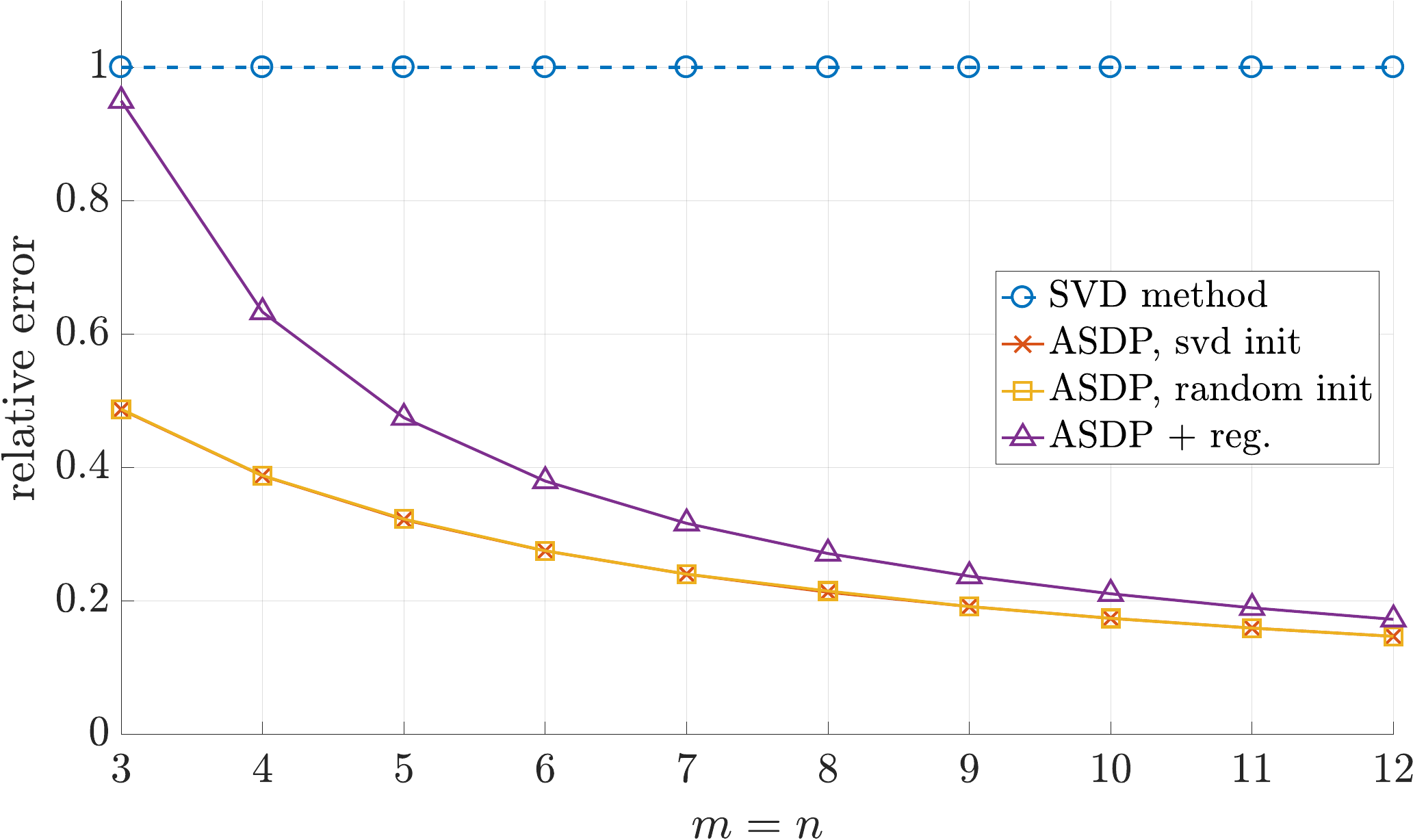}
\caption{Results of Algorithm~\ref{alg:svd} (SVD method) and Algorithm~\ref{alg:Alternating} (ASDP) with $k=1$ for the operator $\op$ in~\eqref{eq: example operator} with $m = n = 3,\dots,10$. The $y$-axis shows the error $\| \op - A \otimes B \|_2$ for the computed approximation $A \otimes B$. Note that $\| \op \|_2 = 1$.
{The red and yellow curves agree.} }
\label{fig: experiment1}
\end{figure}

In Figure~\ref{fig: experiment1} we compare the achieved error $\| \op - A \otimes B \|_2$ of the SVD method computed by Algorithm~\ref{alg:svd} (dashed line) with ASDP solutions obtained with Algorithm~\ref{alg:Alternating} for different values $m = 3,\dots,10$. Three setups for Algorithm~\ref{alg:Alternating} were tested: For the red curve (cross markers) and yellow curve (square markers) no regularization has been applied, whereas in the purple curve (triangle markers) we took $\lambda = \mu = 0.1$. The red curve uses the SVD solution as an initial guess (for $B$). For the yellow and purple curve we used a random initial guess (the same for both). In all cases only $N_{\text{outer}} = 5$ iterations were performed.

As expected, the SVD solution produces a constant error $\| \op - A \otimes B \|_2 = 1$. The other curves follow a predicted algebraic decay. As a curiosity, the purple curve, for which regularization was used, coincides exactly with the error bound $\sigma_1 / (m-1)$. In fact, due to the diagonal structure of $\op$, any $A \otimes B = \beta A_2 \otimes B_2$ with $1 - \beta \le \sigma_1/(m-1)$ achieves this approximation error, and the algorithm indeed returned such solutions with $1 - \beta = \sigma_1/(m-1)$ because with regularization it also aims at $A$ and $B$ with small Frobenius norms. We did not investigate the effect in detail, but it occurs for a certain range of $\lambda$ and~$\mu$. The red and yellow curves in Figure~\ref{fig: experiment1}, obtained without regularization, produce even better approximations and are on top of each other. This shows that in this example the choice of the initial guess does not seem to have a big influence and may even suggest that the obtained solutions could be globally optimal. 

\subsection{Approximation of random operators}

In this experiment we apply the algorithms to a matrix $\op \in \R^{mn \times mn}$ with random Gaussian entries (normalized to $\| \op \|_2 = 1$) and for a sequence of target Kronecker ranks $k=1,2,\dots,\min(m^2,n^2)$. Recall that with $k = \min(m^2,n^2)$ an exact decomposition exists, so the error should be zero.

Figure~\ref{fig: experiment2} shows computed approximation errors $\| \op - \sum_{j=1}^k A_j \otimes B_j \|_2$ for the case $m= 4$, $n = 5$, and $k=1,2,\dots, 16$. The ASDP curves are obtained using the same setup as in section~\ref{sec: experiment1} (again $N_{\text{outer}} = 5$), except that for the purple curve (triangle markers) the regularization parameters were adapted to $\lambda = \mu = 0.1/k$. Decreasing the regularization with $k$ is necessary to account for the effect that the number of penalized terms $\|A_j\|_F$ and $\| B_j \|_F$ grows while the norms $\|\op - \sum_{j=1}^k A_j \otimes B_j \|_2$ remain bounded (and are even intended to decrease). Hence for larger $k$ the regularization terms would dominate in the optimization problem, and indeed in our experiments we observed that using the same $\lambda$ and $\mu$ for all $k$ did not lead to a descending approximation error.

\begin{figure}[t]
\includegraphics[width=.67\textwidth]{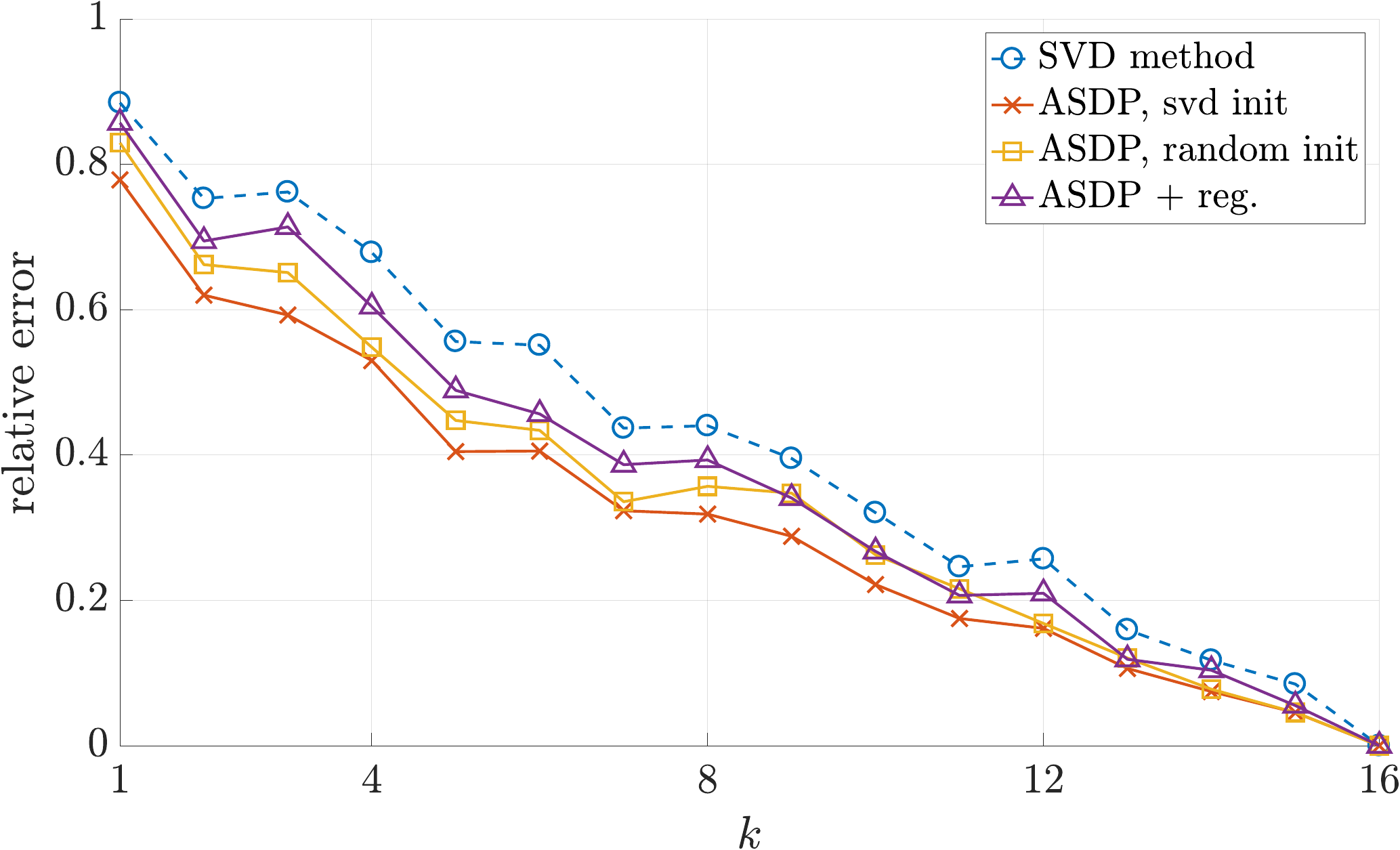}
\caption{Comparison of Algorithm~\ref{alg:svd} (SVD method) and Algorithm~\ref{alg:Alternating} (ASDP) for a random $\op \in \R^{mn \times mn}$ with $m = 4$, $n=5$, scaled to $\| \op \|_2 = 1$. The $y$-axis shows the error $\| \op - \sum_{j=1}^k A_j \otimes B_j \|_2$ of computed approximations for $k=1,2,\dots,16$.}
\label{fig: experiment2}
\end{figure}

We note that the ASDP method is able to significantly improve the relative approximation error compared to the SVD method, even when randomly initialized. This last point is particularly relevant when the computation of an SVD of the $m^2 \times n^2$ matrix $\widehat \op$ needs to be avoided.

\subsection{Inverses of operators with small Kronecker rank}\label{sec: inverse Lyapunov}

Following the considerations in section~\ref{sec: generalizations} we present some experimental results for approximating inverses $\op^{-1}$ of operators $\op$ which themselves have small Kronecker rank. This is based on the modified cost function~\eqref{eq: inverse approximation} to which we also add regularization terms. The ASDP algorithm for this problem is almost identical to Algorithm~\ref{alg:Alternating}, the (formally) only change being that $M(\tau,A,B)$ in the subproblems~\eqref{eq: SDP for A with regularization} and~\eqref{eq: SDP for B with regularization} is replaced with $M_{\text{inv}}(\tau,A,B)$ in~\eqref{eq: M_inv}. We note that in the experiments we did not implement the more efficient representation~\eqref{eq: M_inv efficient} of $M_{\text{inv}}$ but instead treated $\op$ as an unstructured operator in~\eqref{eq: M_inv}.

Two scenarios are considered. The first is a Lyapunov operator
\begin{equation}\label{eq: Lyapunov operator}
\op= L \otimes I + I \otimes L,
\end{equation}
where $L$ is positive definite. This is a special case of more general Sylvester-type operators $\op= L_1 \otimes I + I \otimes L_2$ with $L_1$ and $L_2$ positive definite. Such types of operators play an important role in matrix equations and numerical analysis, and it is well known that their inverses admit highly accurate approximations in spectral norm by operators of low Kronecker rank~\cite{Beylkin2002,Grasedyck04}. Specifically, spectral approximation of $\op^{-1}$ by sums of exponentials leads to operators of the form 
\(
S_k = \sum_{j=1}^k w_j \exp(-t_j L_1) \otimes \exp(-t_j L_2),
\)
where the parameters $w_j$ and $t_j$ can be chosen such that the error to $\op^{-1}$ satisfies
\[
\| \op^{-1} - S_k \|_2 \le C e^{-c k}.
\]
Here the optimal choice of the parameters, as well as the resulting constants $c$ and $C$, depends on the spectral bounds of $L_1$ and $L_2$, and the constants deteriorate with a growing condition number. If only a positive lower bound on the smallest eigenvalues of $L_1$ and $L_2$ is known (and the largest eigenvalues are potentially unbounded), one still has a subexponential but superalgebraic convergence rate $\| \op^{-1} - S_k \|_2 \le C e^{- c \sqrt{k}}$ with (different) constants independent from the upper bound on the spectrum. We refer to~\cite[section~9.8.2]{Hackbusch2019}.

\begin{figure}[t]
\includegraphics[width=.67\textwidth]{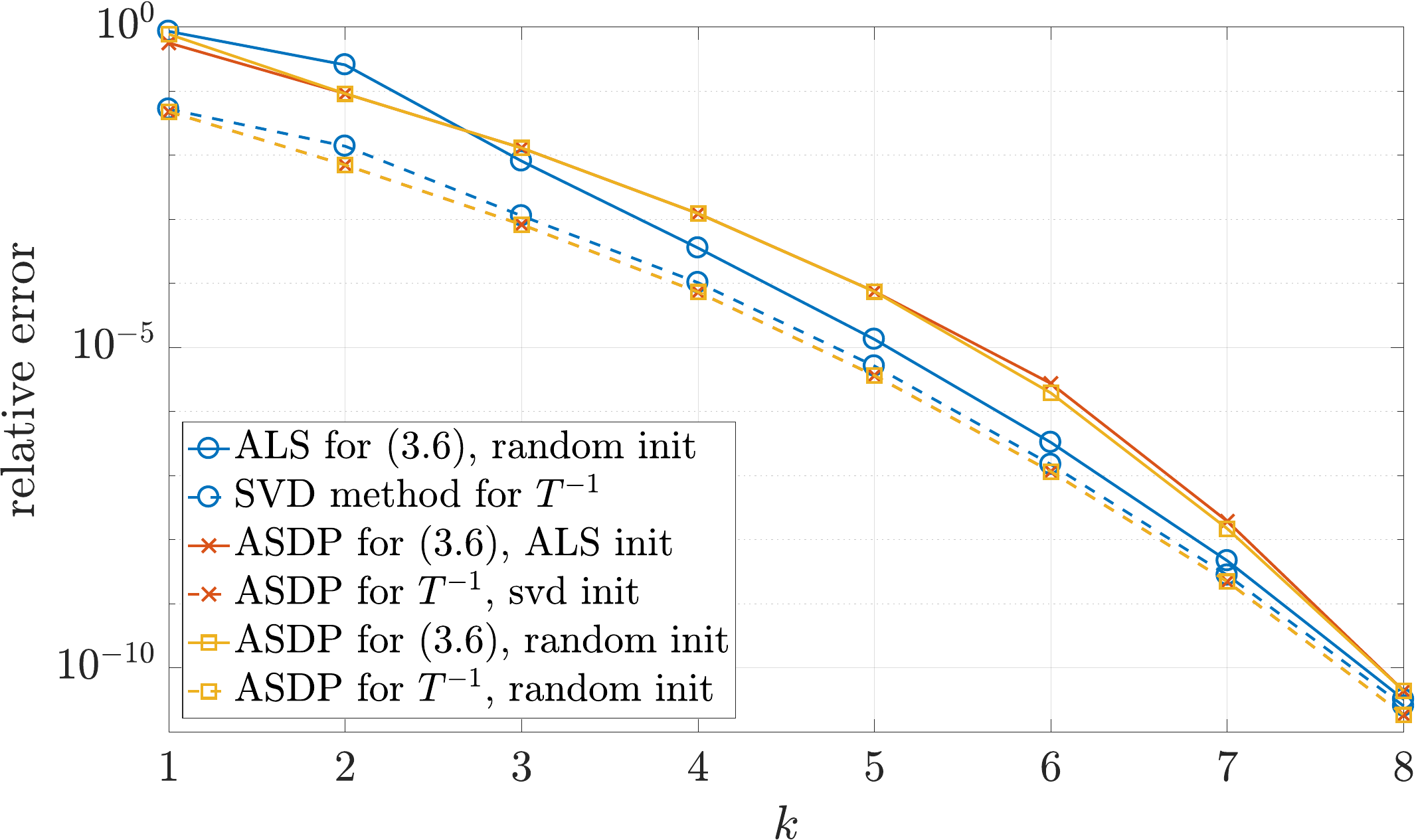}
\caption{Numerical results for approximating the inverse of the Lyapunov operator~\eqref{eq: Lyapunov operator} with $m = n = 10$ and approximation ranks $k=1,2,\ldots,8$. Solid lines correspond to implicit methods based on formulation~\eqref{eq: inverse approximation}. For comparison, dashed lines apply Algorithms~\ref{alg:svd} and~\ref{alg:Alternating} directly to $\op^{-1}$. 
}
\label{fig: experiment3a}
\end{figure}

In the numerical experiment we take $\op$ of the form~\eqref{eq: Lyapunov operator} with $m=n = 10$ and $L = \frac{1}{m-1} \tridiag(-1,2,-1)$ (tridiagonal matrix), which corresponds to a finite-difference discretization of a (negative) second derivative. The results are shown in Figure~\ref{fig: experiment3a}. Depicted are the computed relative errors $\frac{\| \op^{-1} - \sum_{j=1}^k A_j \otimes B_j \|_2}{\| \op^{-1} \|_2}$ for several algorithms and approximation ranks $k=1,2,\dots,8$. The red (cross markers) and yellow (square markers) solid lines are the results of an ASDP method for solving~\eqref{eq: inverse approximation} without regularization ($\lambda = \mu = 0$). The only difference between them is that for the yellow curve a random initialization of the $B_j$ has been taken, whereas for the red curve the $B_j$ were initialized with the solution computed by an alternating least squares (ALS) method for the corresponding problem 
\(
\min_{A_j,B_j} \| \op - \sum_{j=1}^k A_j \otimes B_j \|_F
\) 
in Frobenius norm. This ALS solution is shown as solid blue curve (circle markers). For comparison, we include the results from the SVD and ASDP methods (Algorithms~\ref{alg:svd} and~\ref{alg:Alternating}) when applied directly to the inverse operator $\op^{-1}$, which we explicitly computed for this purpose. In all methods $N_{\text{outer}} = 5$. 

Since the plot is in semilogarithmic scale, the results verify the fact that $\op^{-1}$ is extremely well approximable in spectral norm by sums of Kronecker products. No big differences between the algorithms can be identified. While the implicit approach~\eqref{eq: inverse approximation} clearly works, somewhat surprisingly the ALS method based on the Frobenius norm provides slightly better results. We did not investigate whether this could be caused by internal termination criteria in the SDP solver, but it also does not violate the theory since the approximation of $\op^{-1}$ in~\eqref{eq: inverse approximation} is only an implicit one. We note that in this example $\| \op^{-1} \|_2 \approx 55.5$. Therefore the absolute error $\| \op^{-1} - \sum_{j=1}^k A_j \otimes B_j\|_2$ is larger (at most) by this factor than the depicted curves; see the discussion in section~\ref{sec: generalizations}.

\begin{figure}[t]
\includegraphics[width=.49\textwidth]{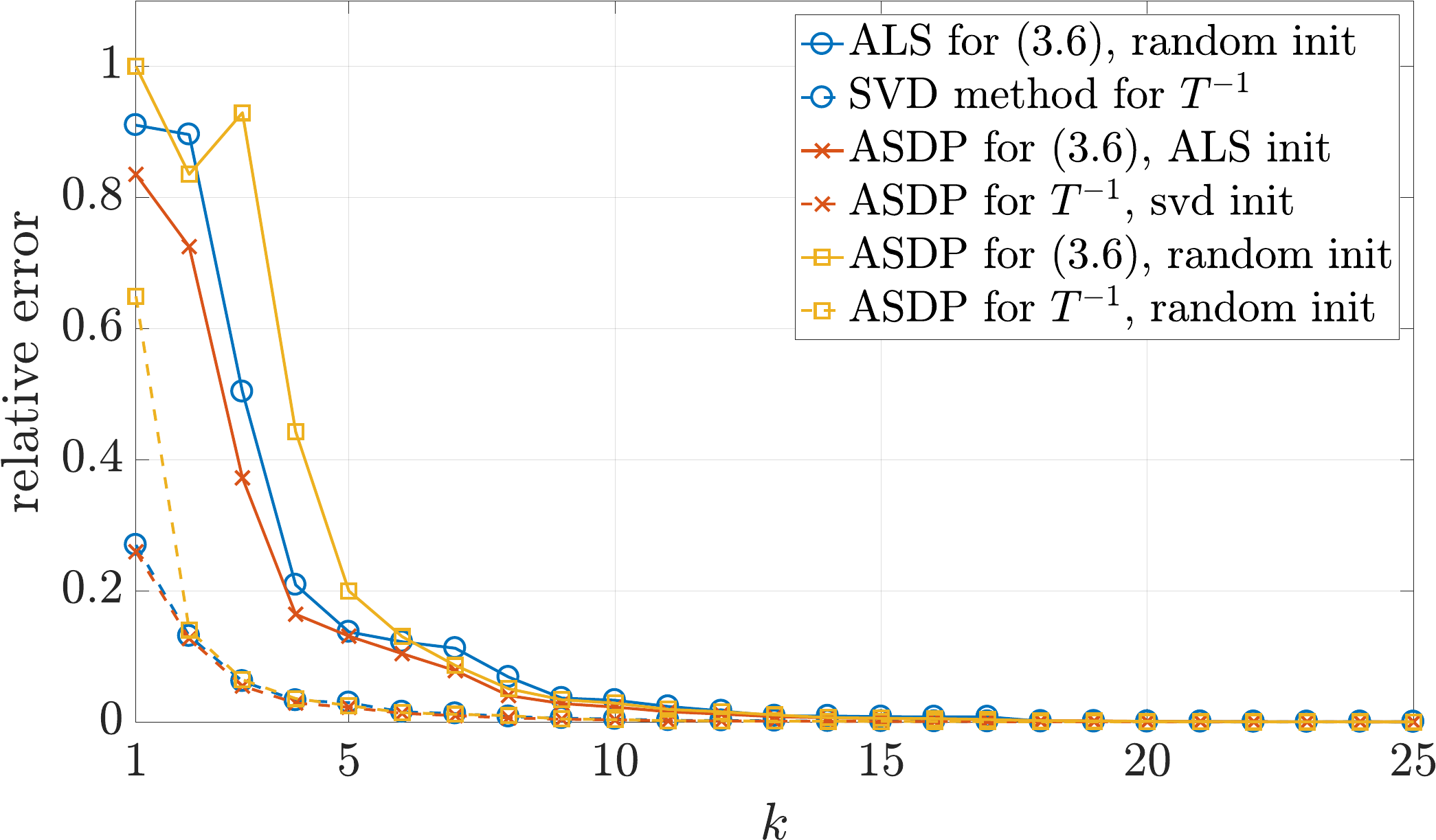}
\includegraphics[width=.48\textwidth]{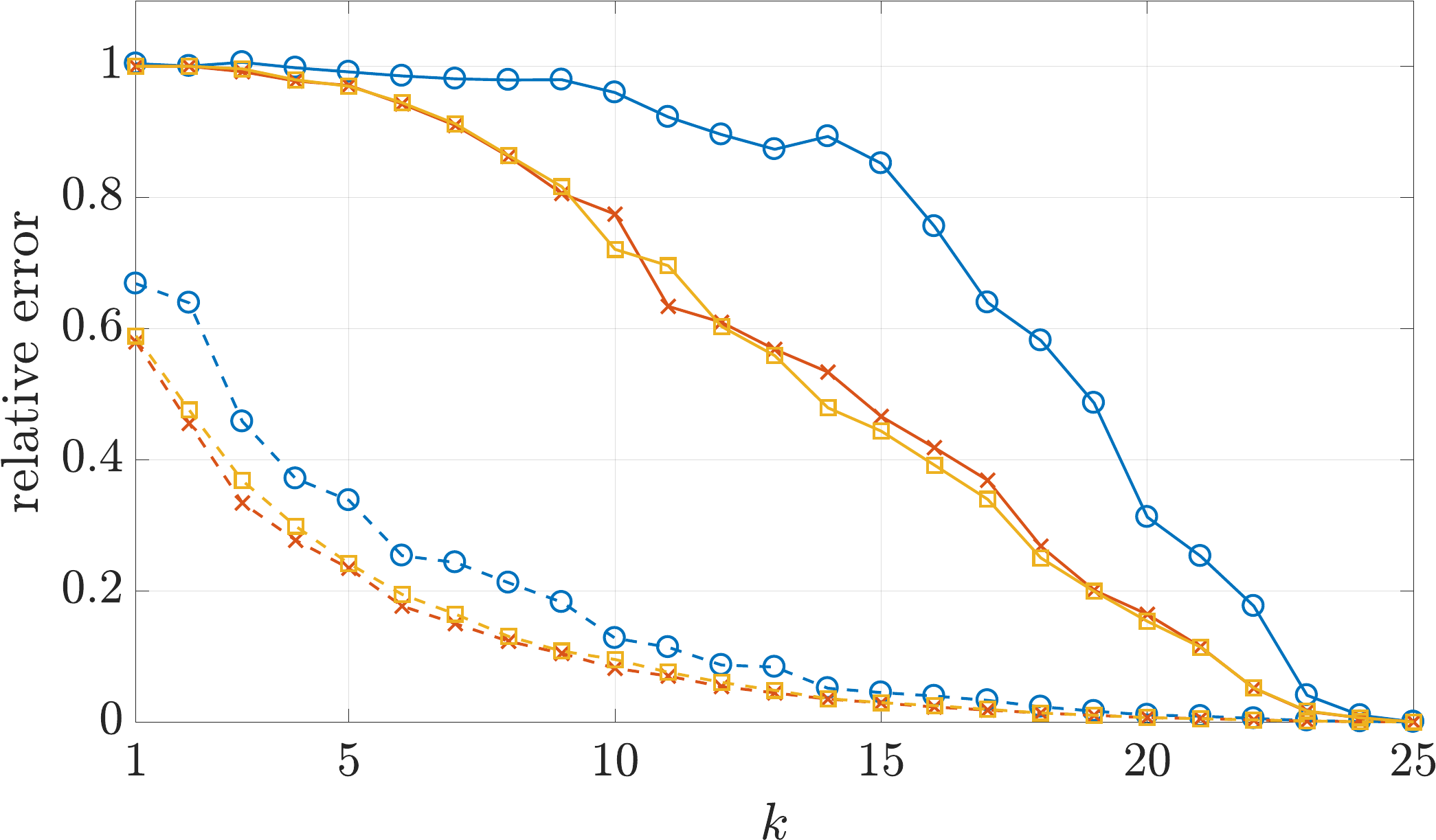}
\caption{Numerical results for approximating the inverse of $\op$ in~\eqref{eq: Kronecker rank 3} with $m = n = 5$ and approximation ranks $k=1,2,\ldots,25$. In the left plot $C_J$, $B_J$ are positive definite, in the right one general. The legend applies to both plots.
}
\label{fig: experiment3bc}
\end{figure}

In the second scenario, we apply the same algorithms to an operator
\begin{equation}\label{eq: Kronecker rank 3}
\op = C_1 \otimes D_1 + C_2 \otimes B_2 + C_3 \otimes D_3
\end{equation}
of Kronecker rank $3$. In this case we are not aware of a rigorous result on the approximability of $\op^{-1}$ by Kronecker products. We take $m=n = 5$ and generate the matrices $C_J$ and $D_J$ randomly; then we scale $\op$ to $\| \op \|_2 = 1$. We distinguish, however, two cases: in the first the $C_J$ and $D_J$ are symmetric positive definite (achieved by replacing them with $C_J^{} C_J^\T$ and $D_J^{} D_J^\T$); in the second they are not. Figure~\ref{fig: experiment3bc} shows the computed relative errors $\frac{\| \op^{-1} - \sum_{j=1}^k A_j \otimes B_j \|_2}{\| \op^{-1} \|_2}$ for $k=1,2,\dots,25$ for both cases. Here we have set $N_{\text{outer}} = 10$ for the alternating optimization methods. The left plot indicates a better error decay in the positive definite case, perhaps even super algebraic. Here $\| \op^{-1} \|_2 \approx 83.4$. In the right plot, the implicit methods (solid lines) do not quite capture the achievable error decay (dashed lines). However, compared to the left plot here the ASDP approach performs significantly better than the ALS method based on Frobenius norm. In this example $\| \op^{-1} \|_2 \approx 203.4$.

\section{Conclusion and outlook}\label{sec: conclusion-outlook}

The problem of approximating a linear operator by sums of Kronecker products in spectral norm is of interest in matrix equations and low-rank calculus. While for the Frobenius norm the approximation problem admits a solution using the SVD, Example~\ref{ex: counterexample} shows that the obtained approximation can be far from optimal in spectral norm and suggests that alternative methods should be studied. In this work, we propose to tackle the approximation problem in spectral norm directly utilizing alternating optimization, where the subproblems can be formulated as SDPs and can therefore be solved in polynomial time to a desired accuracy. The numerical experiments suggest that only a few iterations are necessary to obtain improved approximations using this approach.

This initial work on the subject could be extended in several directions. The presented approach requires the solution of rather high-dimensional SDPs which is computationally expensive and potentially limits the practical applicability. A focus of future work could be to improve the efficiency by fully exploiting structural properties of the operator $\op$ such as sparsity or low (Kronecker) rank in an implementation. In certain situations, it might also be possible to assume a low-rank model for the factor matrices $A_j$ and $B_j$. In such cases one could appeal to the recent literature for (approximately) solving in an efficient manner large SDPs for which one expects low-rank solutions; see, for example,~\cite{Burer2003}. Note that such models, i.e., sums of Kronecker products of low-rank matrices, are of potential interest in quantum entanglement; see, e.g.,~\cite{Gurvits2003,Ni2014,Li2020}.

An extension of the results to complex matrices should be straightforward. We also mention that the alternating SDP approach could be generalized to problems on tensor spaces, that is, for finding low Kronecker rank approximations $\sum_{j=1}^k A_j^{(1)} \otimes \dots \otimes A_j^{(d)}$ to operators on tensor spaces.

\section*{Acknowledgements}
M.D. and V.C. were supported in part by a Max Planck Sabbatical Award.  V.C. was also supported in part by Air Force Office of Scientific Research grants FA9550-20-1-0320 and FA9550-22-1-0225 and by National Science Foundation grant DMS 2113724.
We  thank  the  anonymous  referees  for  their  constructive feedback

\bibliographystyle{amsalpha}
\bibliography{Tensor-Approx}

\bigskip\bigskip

{\small
\noindent
Mareike Dressler, School of Mathematics and Statistics, University of New South Wales, Sydney, NSW 2052, Australia.
\bigskip

\noindent
Andr\'e Uschmajew, Max Planck Institute for Mathematics in the Sciences, 04103 Leipzig, Germany. 

\bigskip

\noindent 
Venkat Chandrasekaran, Department of Computing and Mathematical Sciences and Department of Electrical Engineering, California Institute of Technology, Pasadena, CA 91125, USA.
}

\end{document}